\DeclareMathOperator{\dist}{\textit{d}}
\DeclareMathOperator{\diam}{diam}
\DeclareMathOperator{\edges}{\textit{e}}
\DeclareMathOperator{\er}{\textit{e}^{\textit{r}} }
\newtheorem{theorem}{Theorem}
\newtheorem{proposition}[theorem]{Proposition}
\begin{document}

\title{Growth of graph powers}
\author{A. Pokrovskiy (LSE)}
\maketitle

\begin{abstract}
For a graph $G$, its $r$th power is constructed by placing an edge between two vertices if they are within distance $r$ of each other.  In this note we study the amount of edges added to a graph by taking its $r$th power.
In particular we obtain that either the $r$th power is complete or ``many" new edges are added.  This is an extension of a result obtained by P.~Hegarty for cubes of graphs.
\end{abstract}

\section{Introduction}
This note addresses some questions raised by P. Hegarty in \cite{Hegarty}.  In that paper he studied results about graphs inspired by the Cauchy-Davenport Theorem.

All graphs in this paper are simple and loopless.
For two vertices $u, v \in V(G)$, denote the length of the shortest path between them by $\dist(u,v)$.  
For $v \in V(G)$, define its \emph{ith neighborhood} as $N_i(v) = \{ u \in V(G): \dist(u,v)=i\}$.
The $r$th power of a graph $G$, denoted $G^r$, is constructed from $G$ by adding an edge between two vertices $x$ and $y$ when they are within distance $r$ in $G$.
Define the diameter of $G$, $\diam(G)$, as the minimal $r$ such that $G^r$ is complete (alternatively, the maximal distance between two vertices).  
Denote the number of edges of $G$ by $e(G)$.      
For $v \in V(G)$ and a set of vertices $S$, define $\er (v,S)= |\{u\in S: \dist(v,u)\leq r \}|$.

  The Cayley graph of a subset $A \subseteq \mathbb{Z}_p$ is constructed on the vertex set $\mathbb{Z}_p$.  For two distinct vertices $x, y \in \mathbb{Z}_p$, we define $xy$ to be an edge whenever $x - y \in A$ or $y - x \in A$.
The following is a consequence of the Cauchy-Davenport Theorem (usually stated in the language of additive number theory \cite{cdt}).
\begin{theorem}\label{thm:cauchydavenport}
Let $p$ be a prime, $A$ a subset of $\mathbb{Z}_p$, and $G$ the Cayley graph of $A$.  Then for any integer $r < \mathrm{diam}(G)$:
$$\edges(G^r) \geq r \edges(G) .$$
\end{theorem}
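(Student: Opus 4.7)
The plan is to reduce the statement to the iterated Cauchy--Davenport inequality by translating graph distances into sumsets in $\mathbb{Z}_p$. Set $S = A \cup (-A) \subseteq \mathbb{Z}_p \setminus \{0\}$, so that $\{x,y\}$ is an edge of $G$ exactly when $y-x \in S$. Since $G$ is vertex-transitive of degree $|S|$, we have $\edges(G) = p|S|/2$. Also, since $S = -S$, the set $S$ is symmetric, and this symmetry will persist under iterated sumsets.

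Next, observe that a walk of length $k$ in $G$ from $x$ to $y$ corresponds to a representation $y - x = s_1 + \cdots + s_k$ with each $s_i \in S$; hence $\dist(x,y) \leq r$ if and only if $y-x$ lies in $T_r := \{0\} \cup S \cup 2S \cup \cdots \cup rS$, where $kS$ denotes the $k$-fold sumset of $S$. Padding shorter sums with zeros, $T_r$ is exactly the $r$-fold sumset $rS'$ of $S' := S \cup \{0\}$. Thus $T_r$ is a symmetric subset of $\mathbb{Z}_p$ containing $0$, and $\edges(G^r) = \tfrac{1}{2} p \, (|T_r| - 1)$.

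Now the Cauchy--Davenport Theorem applied $r-1$ times yields
\[
|T_r| = |rS'| \;\geq\; \min\bigl(p,\; r|S'| - (r-1)\bigr) \;=\; \min\bigl(p,\; r|S|+1\bigr).
\]
The hypothesis $r < \diam(G)$ means that $G^r$ is not complete, so some pair $(x,y)$ has $y-x \notin T_r$, forcing $|T_r| < p$. Consequently $|T_r| \geq r|S| + 1$, and therefore
\[
\edges(G^r) \;=\; \tfrac{1}{2} p\,(|T_r| - 1) \;\geq\; \tfrac{1}{2} p \cdot r|S| \;=\; r\, \edges(G),
\]
as desired.

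The only genuine step is the translation in the second paragraph; once distance balls are identified with the iterated sumset $rS'$, the conclusion is essentially forced by Cauchy--Davenport, with the non-completeness hypothesis used precisely to keep us on the linear side of the $\min$.
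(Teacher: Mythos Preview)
Your proof is correct and is exactly the intended derivation: the paper does not supply its own proof of this theorem but merely states it as a consequence of the Cauchy--Davenport Theorem, and your argument carries out that deduction in the standard way by identifying the connection set of $G^r$ with $rS'\setminus\{0\}$ and applying iterated Cauchy--Davenport. There is nothing to add.
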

If we take $A$ to be the arithmetic progression $\{a, 2a,\dots , ka\}$, then equality holds in this theorem for all $r < \diam(G)$.
We might look for analogues of Theorem \ref{thm:cauchydavenport} for more general graphs $G$.  In particular since these Cayley graphs are always regular and (when $p$ is prime) connected, we might focus on regular, connected $G$.
In \cite{Hegarty} Hegarty proved the following theorem:
\begin{theorem}\label{thm:hegarty}
Suppose $G$ is a regular, connected graph with $\diam(G)\geq 3$.  Then we have
$$\edges(G^3) \geq (1+\epsilon) \edges(G) ,$$
with $\epsilon \approx 0.087$
\end{theorem}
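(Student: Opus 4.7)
Let $n = |V(G)|$ and $d$ the common vertex-degree of $G$, so $\edges(G) = nd/2$ and, by double counting, $2\edges(G^3) = \sum_{v} |N_1(v) \cup N_2(v) \cup N_3(v)|$. Since $|N_1(v)| = d$ for every $v$, Theorem \ref{thm:hegarty} is equivalent to
$$
\sum_{v \in V(G)} \bigl(|N_2(v)| + |N_3(v)|\bigr) \;\geq\; \epsilon\, n d,
$$
i.e.\ to the statement that the average of $|N_2(v)| + |N_3(v)|$ is at least $\epsilon d$. Intuitively, the extremal regime should be the one in which almost every neighborhood $N_1(v)$ is nearly a clique, so the proof has to quantitatively preclude this regime by invoking $\diam(G) \geq 3$.

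The key local estimate I would use comes from double counting walks of length $2$ out of a fixed vertex $v$. Write $t(v) = \edges(G[N_1(v)])$. Each of the $d$ neighbors of $v$ has $d-1$ further edges, of which $2t(v)$ in total close a triangle back in $N_1(v)$, while the remaining $d(d-1)-2t(v)$ reach a vertex of $N_2(v)$. Since $|N_1(v) \cap N_1(w)| \leq d$ for every $w \in N_2(v)$, we get
$$
|N_2(v)| \;\geq\; (d-1) - \frac{2t(v)}{d},
$$
and summing yields $\sum_v |N_2(v)| \geq n(d-1) - \tfrac{2}{d}\sum_v t(v)$.

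I would then split on the triangle-density $\bar t = n^{-1}\sum_v t(v)$, with a threshold to be optimised. In the \emph{sparse} regime (small $\bar t$), the estimate above by itself already gives $\sum_v |N_2(v)| \geq \epsilon n d$. In the \emph{dense} regime, most vertices $v$ have $t(v)$ close to $\binom{d}{2}$, so $N_1(v) \cup \{v\}$ is nearly a $K_{d+1}$. Here the diameter hypothesis becomes essential: a vertex $v$ for which this closed neighborhood is \emph{exactly} $K_{d+1}$ forms a connected component of size $d+1$, so connectedness together with $\diam(G) \geq 3$ rule this out; every near-clique vertex must therefore have a neighbor with an external edge. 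A careful accounting of these external edges, anchored on a shortest path $v_0 v_1 v_2 v_3$ witnessing $\diam(G) \geq 3$, should then produce a linear lower bound on $\sum_v |N_3(v)|$.

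The main obstacle is precisely the dense regime: the local bound on $|N_2(v)|$ degenerates as $t(v) \to \binom{d}{2}$, so one must argue more globally, quantifying how the small sets of external edges from each near-clique neighborhood interact and how many distinct vertices they place at distance exactly $3$. The explicit constant $\epsilon \approx 0.087$ should emerge as the optimum of the balance between the two regimes over the chosen threshold, and small-degree cases ($d \leq 2$) should be dispatched directly -- e.g.\ a $2$-regular connected graph of diameter $\geq 3$ is a cycle $C_n$ with $n \geq 6$, for which $\edges(C_n^3) \geq \tfrac{5}{2}\edges(C_n)$, comfortably beyond the claimed bound.
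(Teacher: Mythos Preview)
Your outline is not yet a proof: the dense regime, which you yourself flag as ``the main obstacle,'' is left unresolved. The local estimate $|N_2(v)|\ge (d-1)-2t(v)/d$ is correct, but it degenerates to $O(1)$ once $t(v)$ is within $O(d)$ of $\binom{d}{2}$, and nothing in the sketch upgrades this to a contribution of order $d$ per vertex. In particular, ``anchoring on a shortest path $v_0v_1v_2v_3$'' gives information about at most $O(d)$ vertices (those in the neighbourhoods along that path), not about a positive fraction of all $n$ vertices, so by itself it cannot yield $\sum_v|N_3(v)|\ge \epsilon\, nd$. Your clique observation only rules out $t(v)=\binom{d}{2}$ exactly; it says nothing quantitative when $t(v)$ is merely close to $\binom{d}{2}$, which is precisely the case you must handle. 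As written, the two regimes do not meet: there is no threshold at which both your sparse bound and your (unproved) dense bound are simultaneously $\ge \epsilon\,nd$, so no value of $\epsilon$ actually falls out.

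The paper's argument (which in fact delivers the stronger $\epsilon=\tfrac16$) resolves exactly this difficulty by moving the dense/sparse dichotomy from vertices to \emph{edges}. Call an edge $uv$ blue if $|N_1(u)\cap N_1(v)|\le\tfrac23 d$ and red otherwise; let $B$ be the vertices incident to a blue edge, $R$ the vertices outside $B$ with a neighbour in $B$, and $S$ the rest. A blue edge $uv$ forces $|N_1(u)\cup N_1(v)|\ge\tfrac43 d$, so every vertex of $B\cup R$ has $G^3$-degree at least $\tfrac43 d-1$. The diameter hypothesis is then converted into a \emph{global} structural fact: along any path of length~$3$ at least one of its three edges must be blue, hence every vertex lies within distance~$2$ of $B$. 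Combined with a short double count of the $R$--$S$ edges (redness forces many common neighbours, which for a vertex of $R$ lie in $B\cup R$), this yields $|S|\le|R|$, i.e.\ $|B\cup R|\ge\tfrac12 n$. That global consequence of $\diam(G)\ge 3$ is exactly the missing ingredient in your dense-regime sketch.
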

In other words, the cube of $G$ retains the original edges of $G$ and gains a positive proportion of new ones.  In Section 3 we prove this theorem with an improved constant of $\epsilon = \frac{1}{6}$. The requirement of regularity cannot be easily dropped, as shown in \cite{Hegarty}.

Theorem \ref{thm:hegarty} leads to the question of how the growth behaves for other powers of the $G$.  Note that Theorem \ref{thm:hegarty} cannot be used recursively to obtain such a result -- since the cube of a regular graph is not necessarily regular.  In \cite{Hegarty} it was shown that no equivalent of Theorem \ref{thm:hegarty} exists with $G^3$ replaced by $G^2$, and it was asked what happens for higher powers.  In this note we address that question.

\section{Main Result}
We prove the following theorem:

\begin{theorem}\label{thm:higherpowers}
Suppose $G$ is a regular, connected graph, and $r \leq \diam(G)$.
Then we have:
\[
\edges(G^{r}) \geq \left( \left\lceil \frac{r}{3} \right\rceil -1 \right)\edges(G) .
\]

\end{theorem}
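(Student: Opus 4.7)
My plan is to recast the inequality as a pointwise bound on the degree in $G^r$ and then prove that bound using a short lemma about BFS layers. Writing $d$ for the regular degree of $G$ and $n=|V(G)|$, we have $2\edges(G^r)=\sum_{v}\sum_{j=1}^{r}|N_j(v)|$ and $2\edges(G)=nd$, so the theorem reduces to showing
\[
\sum_{j=1}^{r}|N_j(v)|\ \geq\ \bigl(\lceil r/3\rceil-1\bigr)d
\]
for every $v\in V(G)$.

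The heart of the argument is the following three-layer inequality, which I would prove first. For any $v\in V(G)$ and any $j\geq 0$ with $N_{j+1}(v)\neq\emptyset$,
\[
|N_j(v)|+|N_{j+1}(v)|+|N_{j+2}(v)|\ \geq\ d+1.
\]
Indeed, pick any $u\in N_{j+1}(v)$; by the triangle inequality each of its $d$ neighbours lies in $N_j(v)\cup N_{j+1}(v)\cup N_{j+2}(v)$, and $u$ itself lies in this disjoint union but is not among its own neighbours, giving $d+1$ distinct elements. I would apply this to the disjoint triples $\{3k-2,3k-1,3k\}$ for $k=1,\dots,\lfloor r/3\rfloor$.

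To deduce the per-vertex bound, fix $v$ and split on whether $N_{r-1}(v)=\emptyset$. If $N_{r-1}(v)\neq\emptyset$, a shortest path from $v$ to a vertex of $N_{r-1}(v)$ shows that every layer $N_i(v)$ with $1\leq i\leq r-1$ is nonempty, so the lemma applies to each of the $\lfloor r/3\rfloor$ triples above and summing gives $\sum_{j=1}^{r}|N_j(v)|\geq\lfloor r/3\rfloor(d+1)$; a one-line check through the three residues of $r$ modulo $3$ then confirms $\lfloor r/3\rfloor(d+1)\geq(\lceil r/3\rceil-1)d$. If instead $N_{r-1}(v)=\emptyset$, every other vertex is within distance $r-2$ of $v$, so $\sum_{j=1}^{r}|N_j(v)|=n-1$; the hypothesis $r\leq\diam(G)$ supplies a vertex $v^{*}$ with $N_{r-1}(v^{*})\neq\emptyset$, the first case applied to $v^{*}$ gives $n-1\geq\lfloor r/3\rfloor(d+1)\geq(\lceil r/3\rceil-1)d$, and this settles the second case.

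The only substantive step is the three-layer inequality, which is really a one-line consequence of regularity plus the triangle inequality. The places that demand a little care are the arithmetic verification that $\lfloor r/3\rfloor(d+1)\geq(\lceil r/3\rceil-1)d$ in each residue class of $r$ modulo $3$, and the precise use of $r\leq\diam(G)$ in the second case to bootstrap a lower bound on $n-1$ from a vertex of large eccentricity.
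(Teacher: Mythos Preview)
Your argument is correct. The three-layer inequality is sound (any $u\in N_{j+1}(v)$ has all $d$ neighbours in $N_j(v)\cup N_{j+1}(v)\cup N_{j+2}(v)$, and $u$ itself gives the extra $+1$), the case split on whether $N_{r-1}(v)=\emptyset$ is handled properly via the standard fact that BFS layers in a connected graph are nonempty up to the eccentricity and empty thereafter, the bootstrapping from a vertex $v^{*}$ of eccentricity at least $r$ (guaranteed by $r\le\diam(G)$) is clean, and the residue-by-residue check of $\lfloor r/3\rfloor(d+1)\ge(\lceil r/3\rceil-1)d$ goes through.

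Your route is genuinely different from the paper's. The paper fixes a single BFS root $v$ of maximal eccentricity and, for every vertex $u$, counts edges of $G^r$ from $u$ into \emph{strictly lower} layers $N_0(v)\cup\cdots\cup N_{\dist(u,v)-2}(v)$; summing over $u$ produces a bound depending on $\lceil\tfrac{1}{3}\min\{\dist(u,v),r\}\rceil$, and then an averaging trick with an antipodal vertex $v'\in N_{\diam(G)}(v)$ eliminates the dependence on $\dist(u,v)$. You instead prove a pointwise lower bound on the $G^r$-degree of \emph{each} vertex via BFS from that vertex, which sidesteps the antipode averaging entirely and makes the role of regularity (the three-layer lemma) completely transparent. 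Your version is shorter and more elementary; the paper's layered-edge counting, on the other hand, keeps track of where the new edges of $G^r$ actually go relative to a fixed BFS tree, a viewpoint that is sometimes useful for sharper or more structural refinements.
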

\begin{proof}
Let the degree of each vertex be $d$.    Fix some $v$ with $N_{\mathrm{diam}(G)}(v)$ nonempty.  

Consider any vertex $u\in V(G)$.  Then for any $j$
satisfying $d(u,v)-r<j \leq d(u,v)$, there is a $w_j  \in N_j(v)$ such that
$\dist(u, w_j)<r$.  For such a $w_j$, all vertices $x \in N_1(w_j)$
have $\dist(u,x)\leq r$. All such $x$ are contained in
$N_{j-1}(v)\cup N_j(v)\cup N_{j+1}(v)$, hence
\begin{equation} \label{eq:bound} \tag{1}
\er (u, N_{j-1}(v)\cup N_j(v)\cup N_{j+1}(v)) \geq d.
\end{equation}
 Note that each $j \in \{ d(u,v)-3, d(u,v)-6, \dots,  d(u,v)-3\left( \left\lceil \frac{1}{3}\min\{d(u,v),r\} \right\rceil -1 \right) \}$ satisfies  $d(u,v)-r<j\leq d(u,v)$.  Summing the bound (\ref{eq:bound}) over all these $j$, noting that any edge is counted at most once, we obtain
\begin{align*}
 \er (u, N_0(v)\cup \dots \cup N_{d(u,v)-2}(v)) 
&\geq \left \lceil \frac{1}{3}\min\{d(u,v),r\} \right \rceil d -d  .
\end{align*}

Now we sum this over all $u\in G$.  Note that since the edges counted above go from some $N_i(v)$ to $N_j(v)$ with $j<i$, each edge is counted at most once.  Also we haven't yet counted any of the original edges of $G$, so we might as well add them.  Hence
\begin{align*}
\edges(G^r) &\geq   \sum_{u\in G} \er(u, N_0(v)\cup\dots \cup N_{\dist(u,v)-2}(v)) + \edges(G)
\\     &\geq   \sum_{u\in G} \left\lceil \frac{1}{3}\min\{\dist(u,v),r\} \right\rceil  d  - |V(G)|d + \edges(G)
\\     &=   \sum_{u\in G} \left\lceil \frac{1}{3}\min\{\dist(u,v),r\} \right\rceil  - \edges(G) . \label{eq:vsum} \tag{2}
\end{align*}

Obviously there was nothing particularly special about $v$.  We can get a similar expresssion using $v' \in N_{ \mathrm{diam}(G)}(v)$, namely
\begin{equation*}\label{eq:vsum2}
\edges(G^r) \geq   \sum_{u\in G} \left\lceil \frac{1}{3}\min\{\dist(u,v'),r\} \right\rceil  - \edges(G) .  \tag{3}
\end{equation*}
Averaging (\ref{eq:vsum}) and (\ref{eq:vsum2}) we get

\begin{equation*}\label{eq:vsumaverage}
\edges(G^r) \geq \frac{1}{2} \sum_{u\in G} \left( \left\lceil \frac{1}{3} \min\{\dist(u,v),r\} \right\rceil+\left\lceil \frac{1}{3} \min\{\dist(u,v'),r\}\right\rceil \right)d  - \edges(G). \tag{4}
\end{equation*}
Note that for any $u \in V(G)$ we have
\begin{equation*}\label{eq:sumbound}
 \left\lceil \frac{1}{3} \min\{\dist(u,v),r\} \right\rceil+\left\lceil \frac{1}{3} \min\{\dist(u,v'),r\}\right\rceil \geq\left\lceil \frac{r}{3}\right\rceil. \tag{5}
\end{equation*}
This is because $d(u,v)+d(u,v')\geq d(v,v')=\diam(G) \geq r$.  
Putting the bound (\ref{eq:sumbound}) into the sum (\ref{eq:vsumaverage}) we obtain
\begin{equation*}
 e(G^r) \geq \frac{|V(G)|d}{2}\left\lceil \frac{r}{3}\right\rceil   -\edges(G) 
 = \left\lceil \frac{r}{3} \right\rceil \edges(G)-\edges(G) .
\end{equation*}
Thus the theorem is proven.
\end{proof}

\section{Cubes}
Note that for $r \leq 6$ the bounds in Theorem \ref{thm:higherpowers} are trivial. In particular it says nothing about the increase in the number of edges of the cube of a regular, connected graph. Such an increase was already demonstrated by Hegarty in Theorem \ref{thm:hegarty}.
Here we give an alternative proof of that theorem, yielding a slightly better constant.
\begin{theorem}\label{thm:improvedhegarty}
Suppose $G$ is a regular, connected graph with $\diam(G)\geq3$.  Then we have
$$\edges(G^3) \geq \left(1+\frac{1}{6} \right) \edges(G) .$$
\end{theorem}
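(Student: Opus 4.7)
The plan is to sharpen the proof of Theorem~\ref{thm:higherpowers} by using the tightest admissible choice of index $j$ in~(\ref{eq:bound}), and to supplement the resulting edge count with the intra-layer $G^3$-edges supplied by two $(d+1)$-cliques formed in $G^3$ around a pair of eccentric vertices.

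First I would fix $v\in V(G)$ with $N_{\diam(G)}(v)\neq\emptyset$, pick $v'\in N_{\diam(G)}(v)$, and write $D=\diam(G)\geq 3$. For each $u\in V(G)$ with $i:=\dist(u,v)\geq 2$, inequality~(\ref{eq:bound}) applied with $j=i-2$ (witnessed by the second vertex on a shortest $u$-to-$v$ path) gives
\[
\er(u,\,N_{i-3}(v)\cup N_{i-2}(v)\cup N_{i-1}(v))\,\geq\,d.
\]
Summing over all such $u$ and using that every $G^3$-edge $\{u,x\}$ satisfies $|\dist(u,v)-\dist(x,v)|\leq 3$, each inter-layer $G^3$-edge is counted at most once (from the higher-layer endpoint, provided that endpoint is at distance at least $2$ from $v$); the only $G^3$-edges left uncounted are the $d$ edges from $v$ to $N_1(v)$ and the intra-layer ones. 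Hence
\[
\edges(G^3)\,\geq\,d(n-1-d)\,+\,d\,+\,(\text{intra-layer }G^3\text{-edges}).
\]

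To lower-bound the intra-layer contribution, I would exploit two disjoint $(d+1)$-cliques of $G^3$: $\{v\}\cup N_1(v)\subseteq N_0(v)\cup N_1(v)$ and $\{v'\}\cup N_1(v')\subseteq N_{D-1}(v)\cup N_D(v)$, disjoint since $\dist(v,v')\geq 3$. The first yields $\binom{d}{2}$ intra-$N_1(v)$ edges. For the second, writing $a:=|N_1(v')\cap N_{D-1}(v)|\geq 1$ and $b:=d-a$, the intra-layer edges it contributes are $b$ (from $v'$ to $N_1(v')\cap N_D(v)$), $\binom{a}{2}$ (inside $N_1(v')\cap N_{D-1}(v)$), and $\binom{b}{2}$ (inside $N_1(v')\cap N_D(v)$); a short minimization of $b+\binom{a}{2}+\binom{b}{2}$ over $a\in\{1,\dots,d\}$ shows this quantity is always at least $(d^2-1)/4$. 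Since the three contributions live in pairwise disjoint vertex sets, they may be added with no overcounting, yielding
\[
\edges(G^3)\,\geq\,d(n-1-d)+d+\binom{d}{2}+\tfrac{d^2-1}{4}\,=\,nd-\tfrac{(d+1)^2}{4}.
\]

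Finally, since $\diam(G)\geq 3$ forces $N_0(v),N_1(v),N_2(v),N_3(v)$ all non-empty, $n\geq d+3$; a brief algebraic check then shows $nd-(d+1)^2/4\geq \tfrac{7}{12}nd=\tfrac{7}{6}\edges(G)$ for every $d\geq 1$ and $n\geq d+3$, completing the proof. The main technical obstacle will be the disjointness bookkeeping among the three edge-contributions (in particular, confirming that the intra-layer edges from the two cliques sit in different BFS layers and do not overlap with the inter-layer sum), together with the elementary minimization producing the constant $(d^2-1)/4$.
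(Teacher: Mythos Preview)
Your argument is correct and genuinely different from the paper's. The paper proves Theorem~\ref{thm:improvedhegarty} by colouring each edge \emph{red} or \emph{blue} according to whether its endpoints share more or at most $\tfrac{2}{3}d$ common neighbours, showing that vertices within distance~$1$ of a blue edge have $G^3$-degree at least $\tfrac{4}{3}d-1$, partitioning $V(G)$ into sets $B,R,S$, and then proving $|S|\le|R|$ via a double-count together with a proposition locating a blue edge on every geodesic of length~$3$. Your route is instead a refinement of the BFS-layer machinery of Theorem~\ref{thm:higherpowers}: you take $j=\dist(u,v)-2$ in~(\ref{eq:bound}) to lower-bound the number of \emph{inter-layer} $G^3$-edges by $d(n-d)$, and then harvest \emph{intra-layer} edges from the two $(d+1)$-cliques $\{v\}\cup N_1(v)$ and $\{v'\}\cup N_1(v')$ in $G^2$, which sit in disjoint layer-ranges since $\dist(v,v')\ge 3$. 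The convex minimisation giving $b+\binom{a}{2}+\binom{b}{2}\ge\frac{d^2-1}{4}$ and the final check that $nd-\tfrac{(d+1)^2}{4}\ge\tfrac{7}{12}nd$ whenever $n\ge d+3$ (equivalently $2d^2+9d-3\ge 0$) are both straightforward. Compared with the paper, your argument is more geometric, avoids the separate treatment of $d\le 6$, and connects the cube case directly to the general method; the paper's colouring approach, on the other hand, isolates a structural dichotomy (large vs.\ small common neighbourhoods) that may be of independent interest. One small wording caveat: your phrase ``the only $G^3$-edges left uncounted'' overstates things, since $d(n-1-d)$ is only a lower bound on the relevant inter-layer edges; but the inequality you actually use is valid, because the three contributions (inter-layer edges with higher endpoint at distance $\ge 2$, the $d$ edges $v$--$N_1(v)$, and the intra-layer edges) are pairwise disjoint edge sets.
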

\begin{proof}
Let the degree of each vertex be $d$.   Note that as $G$ is regular, and not complete, every $v \in V(G)$ will have a non-neighbour in $G$.  Together with connectedness this implies that each $v \in V(G)$ has at least one new neighbour in $G^2$.  This implies the theorem for $d \leq 6$.  For the remainder of the proof, we assume that $d > 6$. The proof rests on the following colouring of the edges of $G$:  For  an edge $uv$ in $G$, colour
\begin{align*}
uv \ \text{\bf{red}}\text{ if } &|N_1(u)\cap N_1(v)| > \frac{2}{3} d,
\\uv \ \text{\bf{blue}}\text{ if } &|N_1(u)\cap N_1(v)| \leq \frac{2}{3} d.
\end{align*}

Notice that if $uv$ is a blue edge, then there are at least $\frac{4}{3}d-1$  neighbours of $u$ in $G^2$.  This is because $u$ will be connected to everything in $N_1(u)\cup N_1(v)$ except itself, and $|N_1(u)\cup N_1(v)|\geq \frac{4}{3}d$ for $uv$ blue.
If, in addition, we have some $x$ connected to $u$ by an edge (of any colour), then $x$ will be at distance at most $3$ from everything in $N_1(u) \cup N_1(v) \setminus \{x\}$. Hence $x$ will have at least $\frac{4}{3}d-1$ neighbours in $G^3$.
\\
Partition the vertices of $G$ as follows:

$B = \{v \in V(G) : v \text{ has a blue edge coming out of it}\},$
 
$R = \{v \in V(G) : v\notin B \text{ and there is a } u \in B \text{ such that } uv \text{ is an edge}  \},$

$S = V(G)\setminus (B\cup R)  .$ 
\\
By the above argument, if $v$ is in $B \cup R$, then $e^3(v,V(G))\geq \frac{4}{3}d-1$.  Recall that each $u \in S$ will have at least one new neighbour in $G^2$, giving $e^3(u,V(G))\geq d+1$.  Summing these two bounds over all vertices in $G$, noting that any edge is counted twice, gives
\begin{align*}
2e(G^3) &\geq  \left(\frac{1}{3}d-1\right)|B\cup R|+(d+1)|S|
\\ &=\left(\frac{4}{3}d-1\right)|B\cup R|+ (d+1)\left( |V(G)|-|B\cup R| \right)
\\ &= \frac{7}{6}d|V(G)|+\frac{1}{3}\left(|B\cup R|-\frac{1}{2}|V(G)| \right) \left(d-6 \right)
\\ &= \frac{7}{3}e(G) +\frac{1}{3}\left(|B\cup R|-\frac{1}{2}|V(G)| \right) \left(d-6 \right).
\end{align*}

Recall that we are considering the case when $d > 6$.  Thus to prove that $e(G^3)\geq \frac{7}{6}e(G)$, it suffices to show that $|B \cup R|\geq \frac{1}{2}|V(G)|$.  To this end we shall demonstrate that $|S|\leq |R|$.  First however we need a proposition helping us to find blue edges in $G$.
\begin{proposition} \label{cliqueproposition}
 For any $v \in V(G)$ there is some $b \in B$ such that $d(v,b) \leq 2$.
\end{proposition}
\begin{proof}
Suppose $d(v,u)=3$.  Then there are vertices $x$ and $y$ such that $\{v, x, y, u\}$ forms a path between $u$ and $v$.  We will show that one of the edges $vx$, $xy$ or $yu$ is blue.  This will prove the proposition assuming that there are \emph{any} blue edges to begin with.  However, it also shows the existence of blue edges because $\diam(G)\geq 3$.  

So, suppose that the edges $vx$ and $uy$ are red.
Then we have $|N_1(v)\cap N_1(x)| > \frac{2}{3}d$, and $|N_1(u)\cap N_1(y)| > \frac{2}{3}d$.  Using this and $|N_1(u)\cap N_1(v)| = \emptyset$ gives

\begin{align*}
|N_1(x)\cup N_1(y)| &\geq |(N_1(x)\cup N_1(y))\cap N_1(v)| +|(N_1(x)\cup N_1(y))\cap N_1(u)|
\\ &\geq |N_1(x) \cap N_1(v)| +|N_1(y)\cap N_1(u)|
\\ &> \frac{4}{3}d.
\end{align*}
Therefore $|N_1(x)\cap N_1(y)|=2d-|N_1(x)\cup N_1(y)| \leq \frac{2}{3}d$.  Hence $xy$ is blue, proving the proposition.  
\end{proof}

Now we will show that $|S|\leq |R|$.
Suppose $r \in R$.  By the definition of $R$, there is a $b\in B$ such that $rb$ is an edge.  This edge is neccesarily red as $r \notin B$. Using $N_1(b)\subseteq B\cup R$,we have $|N_1(r)\cap (B \cup R)| \geq |N_1(r)\cap N_1(b)| > \frac{2}{3}d$. Hence
\begin{equation}\label{eq:upperbound}
|N_1(r)\cap S |\leq \frac{1}{3}d. \tag{6}
\end{equation}

Suppose $s \in S$.  
Proposition \ref{cliqueproposition} implies that there is some $r \in R$ such that $sr$ is an edge. Since $sr$ is red, we have $|N_1(s)\cap N_1(r)| > \frac{2}{3}d$.  Using this, the fact that $N_1(s)\subseteq R\cup S$, and (\ref{eq:upperbound}), gives
\begin{align*}\label{eq:lowerbound}
|N_1(s)\cap R| &\geq  |N_1(s)\cap N_1(r)\cap R|
\\             &=|N_1(s) \cap N_1(r)|- |N_1(s)\cap N_1(r)\cap S| 
\\             &\geq |N_1(s) \cap N_1(r)|- |N_1(r)\cap S|
\\             &> \frac{1}{3}d. \tag{7}
\end{align*}

Double-counting the edges between $S$ and $R$ using the bounds (\ref{eq:upperbound}) and (\ref{eq:lowerbound}) gives a contradiction unless $|S| \leq |R|$. Therefore $|B\cup R|\geq \frac{1}{2}|V(G)|$ as required.\end{proof}

\section{Discussion}
Theorem \ref{thm:higherpowers} answers the question of giving a lower bound on the number of edges that are gained by taking higher powers of a graph.  We obtain growth that is linear with $r$ -- just as in Theorem \ref{thm:cauchydavenport}.
\begin{itemize}

\item
The constant $\left \lceil \frac{1}{3}r \right \rceil$ in Theorem \ref{thm:higherpowers} cannot be improved to something of the form $\lambda r$ with $\lambda > \frac{1}{3}$.  To see and consider the following sequence of graphs $H_r(d)$ as $d$ tends to infinity:

Take disjoint sets of vertices $N_0,...,N_r$,  with $|N_i|=d-1$ if $i\equiv 0 \pmod 3$ and $|N_i| = 2$ otherwise.  Add all the edges within each set and also between neighboring ones.  So if $u \in N_i$, $v\in N_j$, then $uv$ is an edge whenever $|i-j|\leq 1$ (see Figure \ref{fig:img1}).

\begin{figure}[htp]
\centering
\includegraphics[scale=0.2,width=100mm]{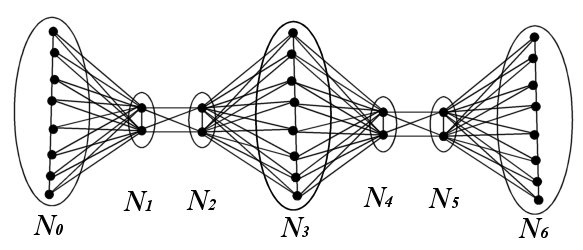}
\caption[My Image] {The graph $H_6(9)$.}\label{fig:img1}
\end{figure}

The number of edges in $H_r(d)$ is at least the number of edges in the larger classes which is $\left\lceil \frac{1}{3}(r+1) \right\rceil {d-1 \choose 2} $.

The $r$th power $H_r(d)^r$ has less than ${|V(G)|\choose 2}$ edges which is less than ${\lceil \frac{1}{3}(r+1) \rceil(d+3) \choose 2}$.  Therefore,
\begin{equation*}
\limsup_{d \to \infty} \frac{\edges(H_r(d)^r)}{\edges(H_r(d))} \leq \lim_{d\to \infty}  \frac{{\lceil \frac{1}{3}(r+1) \rceil(d+3) \choose 2}}{\left\lceil \frac{1}{3}(r+1) \right\rceil {d-1 \choose 2} } = \left \lceil \frac{1}{3}(r+1) \right \rceil. \label{eq:limit} 
\end{equation*}

The graphs $H_r(d)$ are not regular, but if $r \not\equiv 2 \pmod 3$, it
is possible to remove a small (less than $|V(G)|$) number of edges
from the graphs and make them $d$-regular without losing connectedness
(any cycle passing through all the vertices in $N_1 \cup ... \cup N_{r-1}$ would work).  Call
these new graphs $\hat H_r(d)$.  By the same argument as before we have
$$\limsup_{d \to \infty} \frac{\edges(\hat H_r(d)^r)}{\edges( \hat H_r(d))}
\leq \left \lceil \frac{1}{3}(r+1) \right \rceil.$$

If $r \equiv 2 \pmod 3$, a similar trick can be performed, but we'd
need to start with $|N_i|=d-1$ if $i\equiv 1 \pmod 3$ and $|N_i| = 2$
otherwise.

So the factor of $\frac{1}{3}$ cannot be improved for regular graphs.  All these examples are inspired by one given in
\cite{Hegarty} to show that for any $\epsilon$ there are regular
graphs $G$ with $e(G^2) < (1+\epsilon)e(G)$.
\item
Despite the above example, there is certainly room for further improvement in Theorems \ref{thm:higherpowers} and \ref{thm:improvedhegarty}.  In particular, Theorem \ref{thm:improvedhegarty} doesn't seem tight in any way.  The graphs $\hat H_r(d)$  seem to give essentially the slowest possible growth for \emph{all} powers of regular graphs.  Considering the graphs $H_3(d)$ leads to the conjecture of $$e(G^3)\geq 2e(G),$$   for $G$ regular, connected, and  $\diam(G)\geq 3$.

A shortcoming of Theorem \ref{thm:higherpowers} is that it only gives a good bound if the diameter of $G$ is close to $r$.  When this is not the case, the number of edges in $G^r$ seems to grow faster.  It would be interesting to obtain a good lower bound on $e(G^r)$ involving both $r$ and $\diam(G)$.

\item
All the questions from this paper and \cite{Hegarty} could be asked for directed graphs.  In particular one can define directed Cayley graphs for a set $A \subseteq \mathbb{Z}_p$ by letting $xy$ be a directed edge whenever $x-y \in A$.  Then the Cauchy-Davenport Theorem implies an identical version of Theorem \ref{thm:cauchydavenport} for directed Cayley graphs.
In this setting it is easy to show that there is growth even for the square of an out-regular oriented graph $D$ (a directed graph where for a pair of vertices $u$ and $v$, $uv$ and $vu$ are not both edges).  In particular, we have
\[
\edges(D^2)\geq \frac{3}{2}\edges(D).
\tag{8}
\]
This occurs because every vertex $v$ has $|N_2 ^{out}(v)|\geq \frac{1}{2}|N_1 ^{out}(v)|$ in an out-regular oriented graph.  It's easy to see that this is best possible for such graphs.  One can construct out-regular oriented graphs with an arbitrarily large proportion of vertices $v$ satisfying $|N_2 ^{out}(v)| = \frac{1}{2}|N_1 ^{out}(v)|$.

However if we insist on \emph{both} in and out-degrees to be
constant, (8) no longer seems tight.  Such graphs are always
Eulerian.  In \cite{Seymour} there is a conjecture attributed to
Jackson and Seymour that if an oriented graph $D$ is Eulerian, then
$\edges(D^2)\geq 2 \edges(D)$ holds.  If this conjecture were proved,
it would be an actual generalization of the directed version of
Theorem \ref{thm:cauchydavenport}, as opposed to the mere analogues proved above.

\end{itemize}
\section{Acknowledgement}
I would like to thank my supervisors Jan van den Heuvel, and Jozef Skokan for much helpful advice and remarks.


\begin{thebibliography}{9}
\bibitem{cdt}
H. Davenport, \emph{On the addition of residue classes}, J. London Math. Soc., Issue 10, 30-32, (1935).

\bibitem{Hegarty}
      P. Hegarty,
      \emph{A Cauchy-Davenport type result for arbitrary regular graphs}.
      	arXiv:0910.2250v2  [math.CO] ,
      (2009).

\bibitem{Seymour}
      B. D. Sullivan, \emph{A summary of results and problems	
related to the Caccetta-H\"{a}ggkvist conjecture}.  	arXiv:math/0605646v1 [math.CO] , (2006).
\end{thebibliography}
\end{document}